\documentclass[letterpaper, 10 pt, conference]{ieeeconf}  % Comment this line out if you need a4paper
\IEEEoverridecommandlockouts                              % This command is only needed if 
                                                          % you want to use the \thanks command

\overrideIEEEmargins                                      % Needed to meet printer requirements.
\usepackage{amsmath}
\usepackage{amsfonts}
\usepackage{latexsym}
\usepackage{amssymb}
\usepackage{graphicx}
\usepackage{color}
\usepackage{chemarrow}
\usepackage{fullpage}
\usepackage{framed}
\usepackage{verbatim}
\usepackage{epsfig}
\usepackage{hyperref}
\usepackage{authblk}

\title{On the Lyapunov function for complex-balanced mass-action systems.}
\author{\small Manoj Gopalkrishnan\thanks{manojg@tifr.res.in}}
\affil{School of Technology and Computer Science,\\ Tata Institute of Fundamental Research, Mumbai, India.}
%\date{26 December, 2013}
\begin{document}
\maketitle
%\par \noindent

%\theoremstyle{definition}
\newtheorem{theorem}{\bf{Theorem}}[section]
\newtheorem{lemma}[theorem]{\bf{Lemma}}
\newtheorem{corollary}[theorem]{\bf{Corollary}}
\newtheorem{proofl}[theorem]{\bf{Proof}}
\newtheorem{open}{\bf{Open}}
\newtheorem{remark}[theorem]{Remark}
\newtheorem{definition}{\bf{Definition}}[section]
\newtheorem{conjecture}{\bf{Conjecture}}[section]
\newtheorem{postulate}{\bf{Postulate}}[section]

\newtheorem{example}{\bf{Example}}[section]
\newtheorem{thought}{\bf{Thought Experiment}}[section]
\newtheorem{notation}[definition]{\bf{Notation}}
\newtheorem{fact}{\bf{Fact}}
\newtheorem{note}[definition]{\bf{Note}}
\newcommand{\CE}{{\cal E}}
\newcommand{\MC}{\mathbb{C}}
\newcommand{\MR}{\mathbb{R}}
\newcommand{\MZ}{\mathbb{Z}}
\newcommand{\MN}{\mathbb{N}}
\newcommand{\MQ}{\mathbb{Q}}
\newcommand{\iton}{i=1,2,\ldots,n}
\newcommand{\jtom}{j=1,2,\ldots, m}
\newcommand{\MP}{\mathbb{R}_{>0}}
\newcommand{\bs}{\boldsymbol}
\newcommand{\op}{\operatorname}

\newcommand{\ra}{\rightarrow}
\def\R{\mathcal R}
\def\C{\mathcal C}
\renewcommand\comment[1]{{\color{magenta} $\star$#1$\star$}}
\newenvironment{bullets}%
        {\begin{list}
                {\noindent\makebox[0mm][r]{$\bullet$}}
                {\leftmargin=5.5ex \usecounter{enumi}
 		 \topsep=1.5mm \itemsep=-.75ex}
        }
        {\end{list}}
       
\begin{center}

\end{center}

\begin{abstract}
We present a new proof, using the log-sum inequality, that the pseudo-Helmholtz free energy function is a Lyapunov function for complex-balanced mass-action systems. This proof is shorter and simpler than previous proofs.
\end{abstract}

\section{Introduction}
Complex-balanced mass-action systems were introduced by Horn and Jackson in 1972~\cite{HornJackson}. They have also been called Toric Dynamical Systems~\cite{TDS} because of connections to toric geometry. Horn and Jackson showed that such systems admit Lyapunov functions of the form $g_\alpha(x) := \sum_i x_i \log x_i - x_i - x_i \log \alpha_i$, where the sum is over all species, the variables $x_i$ represent concentrations of species, and $\alpha$ represents a special concentration vector, called a point of complex balance. We present a new proof that makes use of the log-sum inequality and graph-theoretic ideas to help shorten their proof.

\section{Preliminaries}
Fix a finite set $S$ of \textbf{species}. A \textbf{chemical complex} (or an $S$-complex) is a vector $y\in\MN^S$. A \textbf{chemical reaction} (over $S$) is a pair $(y,y')$ of chemical complexes, written $y\ra y'$, with {\em reactant} $y$ and {\em product} $y'$. A \textbf{chemical reaction network} consists of a finite set $S$ of species, and a finite set $\R$ of chemical reactions. 

When $a,b$ are vectors in $\MR^S$ then $a/b$ is shorthand for the vector $(a_i/b_i)_{i\in S}$, the notation $a^b$ is shorthand for the monomial $\prod_{i\in S} a_i^{b_i}$, and 
$\log a$ is shorthand for the coordinate-wise logarithm $(\log a_i)_{i\in S}$.

A \textbf{mass action system} consists of a chemical reaction network $(S,\R)$ and a \textbf{rate function} $k:\R\to\MR_{>0}$. The \textbf{mass-action equations} are the system of ordinary differential equations in {\em concentration} variables $\{x_i(t) \mid i\in S\}$: 
\begin{equation}\label{eqn:ma}
\dot{x}(t) = \sum_{y\to y' \in \R} k_{y\to y'} (y' - y) x(t)^y 
\end{equation}
where $x(t)$ represents the vector $(x_i(t))_{i\in S}$ of concentrations at time $t$.

Let $\C =\{ y, y' \mid  y\to y'\in\R\}$ be all the complexes that occur in $\R$. Then the directed graph with vertex set the complexes $\C$ and edge set the reactions $\R$ is called the \textbf{reaction graph}. 

Given a concentration vector $x\in\MR^S_{\geq 0}$, we define the flow $f_x:\R\to\MR_{\geq 0}$ on the reaction graph $(\C,\R)$ as follows: on each directed edge $y\to y'$, the flow $f_x(y\to y'):=k_{y\to y'} x^y$. In these terms, Equation~\ref{eqn:ma} can be rewritten as
\begin{equation}\label{eqn:ma2}
\dot{x}(t) = \sum_{y\to y' \in \R} (y' - y) f_{x(t)}(y\to y') 
\end{equation}

A reaction network $(S,\R)$ is \textbf{weakly-reversible} iff every reaction in its reaction graph belongs to a directed cycle. In other words, for every edge $y\to y' \in\R$, there is a path in the graph $(\C,\R)$ from $y'$ to $y$. In other words, every connected component of the reaction graph is strongly-connected.

A mass-action system $(S,\R,k)$ is \textbf{complex-balanced} iff there exists  $\alpha\in \MR^S_{>0}$ such that for every complex $y\in \C$:
\begin{equation}\label{eqn:cmplxblnc}
\sum_{\{ y' \mid y'\to y\in\R\}} k_{y'\to y} \alpha^{y'} = \sum_{\{y''\mid y\to y'' \in\R\}} k_{y\to y''} \alpha^y .
\end{equation}
The point $\alpha$ satisfying Equation~\ref{eqn:cmplxblnc} above is called a {\em point of complex balance}.
In words, when the concentration of species $i\in S$ is $\alpha_i$, the total inflow into vertex $y$ in the reaction graph (LHS of Equation~\ref{eqn:cmplxblnc}) equals the total outflow from that vertex (RHS of Equation~\ref{eqn:cmplxblnc}). Thus $\alpha$ is a point of complex balance iff the flow $f_\alpha$ is {\em conservative}, i.e., at every node, it satisfies {\em Kirchhoff's current law}~\cite{clayton2001fundamentals}.

Recall that a cut is a way of partitioning the vertices of the graph into two sets. Given a conservative flow on a graph, it can be shown by induction that the flow across every cut is zero, by moving one vertex across the cut at a time. We immediately have the following lemma.
\begin{lemma}\label{lem:zerocut}
Let $(S,\R,k)$ be a complex-balanced mass-action system with point of complex balance $\alpha$. Then the net flow $f_\alpha$ across every cut of the reaction graph is zero.
\end{lemma}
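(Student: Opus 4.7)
The plan is to induct on the size of one side of the cut, using Kirchhoff's current law at a single vertex as the atomic step. Concretely, let $(A, B)$ be a cut of $\C$ and define the net cut flow
\[
N(A) := \sum_{\substack{y\to y'\in\R \\ y\in A,\, y'\in B}} f_\alpha(y\to y') - \sum_{\substack{y\to y'\in\R \\ y\in B,\, y'\in A}} f_\alpha(y\to y').
\]
I would prove by induction on $|A|$ that $N(A)=0$ for every $A \subseteq \C$.

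The base case $A=\emptyset$ is immediate, since both sums are empty. For the inductive step, suppose $N(A)=0$ for some $A \subseteq \C$, pick any vertex $v\in B = \C \setminus A$, and compare $N(A)$ with $N(A\cup\{v\})$. The edges whose cut-status changes when $v$ is moved are exactly the edges incident to $v$, and they fall into four categories: edges $u\to v$ with $u\in A$ (leaving the cut), edges $v\to u$ with $u\in A$ (leaving the cut), edges $v\to u$ with $u\in B\setminus\{v\}$ (entering the cut as $A$-to-$B$), and edges $u\to v$ with $u\in B\setminus\{v\}$ (entering the cut as $B$-to-$A$). Bookkeeping the signed contributions yields
\[
N(A\cup\{v\}) - N(A) = \sum_{v\to u\in\R} f_\alpha(v\to u) - \sum_{u\to v\in\R} f_\alpha(u\to v),
\]
which is precisely the difference between total outflow and total inflow at $v$. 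By the complex-balance equation~(\ref{eqn:cmplxblnc}) applied to $v$, this quantity vanishes, so $N(A\cup\{v\})=N(A)=0$, completing the induction.

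The only place where care is needed is the sign bookkeeping in the inductive step: one must verify that the four categories of incident edges combine to give exactly the Kirchhoff imbalance at $v$ (with the correct sign), rather than, say, twice the inflow or some other combination. Once that is set up correctly, the rest is essentially a telescoping observation, and no further ingredient beyond Kirchhoff's law at a single vertex is needed.
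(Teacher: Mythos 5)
Your proof is correct and follows exactly the route the paper sketches: induction on the cut by moving one vertex across at a time, with the complex-balance (Kirchhoff) condition at that vertex supplying the zero increment. The sign bookkeeping in your four cases is right, so nothing is missing.
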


It is well-known that if a mass-action system $(S,\R,k)$ is complex-balanced then the reaction network $(S,\R)$ is weakly-reversible. If not, then there are two complexes $y$ and $y'$ connected by a path from $y$ to $y'$ in the reaction graph, with no path from $y'$ to $y$. We can construct a cut consisting of all complexes that have directed paths to $y$ on one side, and everything else on the other. Certainly, $y'$ is on the other side, so this cut is not a trivial cut. Further, this cut has flow from the $y$ side to the $y'$ side, but it can not have any flow coming back. This contradicts Lemma~\ref{lem:zerocut}, and we're done.

In the rest of this paper, $(S,\R)$ will be a weakly-reversible chemical reaction network, $\C$ will be the set of complexes that occur in $\R$, and $(S,\R,k)$ will be a complex-balanced mass-action system with \textbf{point of complex balance} $\alpha\in\MR^S_{>0}$. (It is easily checked that every weakly-reversible chemical reaction network admits extension to a complex-balanced mass-action system: choose $k$ to be the constant $1$ function, and $\alpha$ to be the point of all $1$'s.)

The \textbf{pseudo-Helmholtz function} $g_\alpha : \MR^S_{\geq 0} \to\MR$ of $(S,\R,k)$ at $\alpha$ is the function
\[
g_\alpha(x) = \sum_{i\in S} x_i \log x_i - x_i - x_i \log \alpha_i
\]
where $0 \log 0$ is set to $0$ by definition.  This function was first introduced to the study of mass-action kinetics in the context of complex balance by Horn and Jackson~\cite{HornJackson} in 1972. They also proved the following theorem.

\begin{theorem}\label{thm:main}
If $x(\cdot)$ is a solution to Equation~\ref{eqn:ma} with $x(t)\in\MR^S_{>0}$ for all $t\geq 0$ then $\frac{d g_\alpha(x(t))}{dt} \leq 0$ with equality iff $x(t)$ is a point of complex balance.
\end{theorem}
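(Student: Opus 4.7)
The plan is to differentiate $g_\alpha$ along a trajectory, write $-\dot g_\alpha$ as a single sum $\sum_e a_e\log(a_e/b_e)$ indexed by reactions $e$, apply the log-sum inequality once, and use complex balance of $\alpha$ to check that the marginals $\sum_e a_e$ and $\sum_e b_e$ agree.

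First, since $\partial g_\alpha/\partial x_i = \log(x_i/\alpha_i)$, the chain rule and Equation~\ref{eqn:ma} give
\[
\frac{dg_\alpha}{dt} = \sum_{y\to y'\in\R} k_{y\to y'}\,x^y\log\frac{(x/\alpha)^{y'}}{(x/\alpha)^y}.
\]
The key choice is $a_{y\to y'} := k_{y\to y'}x^y$ (which equals $f_x(y\to y')$) and $b_{y\to y'} := k_{y\to y'}\alpha^y(x/\alpha)^{y'}$; then $\log(a_{y\to y'}/b_{y\to y'}) = \log((x/\alpha)^y/(x/\alpha)^{y'})$ and $-\dot g_\alpha = \sum_e a_e\log(a_e/b_e)$. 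The log-sum inequality then yields $-\dot g_\alpha \geq A\log(A/B)$ with $A := \sum_e a_e$ and $B := \sum_e b_e$.

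Next, I would show $A=B$. Regrouping $A$ by reactant writes it as a sum over vertices $y$ of $(x/\alpha)^y$ times the outflow of $f_\alpha$ at $y$, while regrouping $B$ by product writes it as a sum over vertices $y'$ of $(x/\alpha)^{y'}$ times the inflow of $f_\alpha$ at $y'$. Complex balance (Equation~\ref{eqn:cmplxblnc}) equates inflow and outflow of $f_\alpha$ at every vertex, so a rename of the summation index gives $A=B$. Therefore $A\log(A/B)=0$ and $\dot g_\alpha\leq 0$.

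For the equality clause, equality in the log-sum inequality forces all ratios $a_{y\to y'}/b_{y\to y'}$ to share a common value, and $A=B$ pins this value to $1$, so $(x/\alpha)^y=(x/\alpha)^{y'}$ on every reaction $y\to y'$. Since each connected component of the reaction graph is (undirectedly) connected, the function $y\mapsto (x/\alpha)^y$ is therefore constant on each component; factoring this constant out of both sides of Equation~\ref{eqn:cmplxblnc} then transfers complex balance from $\alpha$ to $x$. Conversely, if $x$ is a point of complex balance then $f_x$ is conservative, and a vertex-wise regrouping of $\dot g_\alpha = \sum_{y\to y'} f_x(y\to y')\log((x/\alpha)^{y'}/(x/\alpha)^y)$ collapses it to zero. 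The main obstacle is this equality clause: the inequality itself falls out almost mechanically once the right $a_e, b_e$ are chosen, whereas promoting the edgewise identity $(x/\alpha)^y=(x/\alpha)^{y'}$ to global complex balance at $x$ requires a bit of care.
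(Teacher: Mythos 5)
Your proof is correct, but it takes a genuinely different route from the paper's. The paper proves the theorem ``one cycle at a time'': it first decomposes the conservative flow $f_\alpha$ into constant flows on simple directed cycles (Lemma~\ref{lem:graphtheo}), reduces to strongly connected components, and then applies the log-sum inequality once per cycle (Lemma~\ref{lem:cyc}), where the terms are indexed by the vertices of the cycle and the telescoping around the cycle makes the two totals in the log-sum inequality agree automatically. You instead apply the log-sum inequality exactly once, globally, with terms indexed by reactions, choosing $a_{y\to y'}=f_x(y\to y')=f_\alpha(y\to y')(x/\alpha)^{y}$ and $b_{y\to y'}=f_\alpha(y\to y')(x/\alpha)^{y'}$; the identity $A=B$ that replaces the telescoping is precisely Kirchhoff's law for $f_\alpha$ (Equation~\ref{eqn:cmplxblnc}) summed against the vertex weights $(x/\alpha)^y$. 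Your computation checks out: $\log(a_e/b_e)=\log\bigl((x/\alpha)^y/(x/\alpha)^{y'}\bigr)$, so $-\dot g_\alpha=\sum_e a_e\log(a_e/b_e)\geq A\log(A/B)=0$, and the equality analysis --- the common ratio is pinned to $1$ by $A=B$ (note $B>0$ since $x,\alpha\in\MR^S_{>0}$ and all rates are positive), hence $(x/\alpha)^y$ is constant on each undirected component, and factoring that constant out of Equation~\ref{eqn:cmplxblnc} transfers complex balance to $x$ --- is sound in both directions. What each approach buys: yours eliminates the graph-theoretic decomposition lemma entirely and needs no appeal to weak reversibility or strong connectivity for the inequality itself, so it is shorter and more self-contained; the paper's cycle-at-a-time structure follows Horn and Jackson more closely and isolates Lemma~\ref{lem:cyc} as a reusable statement of independent interest (for instance, its extension to boundary points noted in the remark after it).
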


We present a new and simpler proof to Theorem~\ref{thm:main}. The main idea, following Horn and Jackson, remains to prove the theorem ``one cycle at a time.'' For every cycle, our proof is a straightforward application of the log-sum inequality, which is well-known in information theory and convex analysis. 

\section{Proof for Cycles}
We first illustrate our proof idea with a simple example.
\begin{example}
Consider a single reversible reaction. So $y,y'$ are the two chemical complexes, and the reactions are $y\to y'$ and $y'\to y$. Both the rates are set to $1$. This mass-action system is complex balanced with point of complex balance $\alpha = (1,1)$ because the underlying reaction network $(S, \{y\to y', y'\to y\})$ is weakly-reversible, and for every complex in the reaction graph, all outflows and inflows equal $1$. (More generally, whenever a reaction network is weakly-reversible, and the rate function is constant $1$, the all $1$'s vector is a point of complex balance.) Let $g = g_\alpha$. Let $x(\cdot)$ be a solution to Equation~\ref{eqn:ma}. Then by the chain rule, we have for all $t\geq 0$:
\begin{align*}
\frac{dg(x(t))}{dt} &= \langle \nabla g(x(t)),\dot{x}(t)\rangle\
\\&=\langle \log \frac{x(t)}{\alpha}, k_{y\to y'}(y' - y) x(t)^y + k_{y' \to y}(y-y')x(t)^{y'}\rangle\
\\&= \langle \log \frac{x(t)}{\alpha}, (y' - y) (k_{y\to y'}x(t)^y - k_{y' \to y} x(t)^{y'})\rangle\
\\&= \langle \log x(t), (y' - y) (x(t)^y -  x(t)^{y'})\rangle\
\\&= (x(t)^y -  x(t)^{y'})\log \frac{x(t)^{y'}}{x(t)^y}\
\\&\leq 0.
\end{align*}
Note that $(a - b)\log\frac b a \leq 0$ for all positive reals $a$ and $b$, with equality precisely when $a=b$. So the last inequality is an equality precisely when $x(t)^y = x(t)^{y'}$, which is precisely the condition for $x(t)$ being a point of complex balance in this network.
\end{example}

A reversible reaction is a $2$-cycle. With one extra trick, precisely the above proof works when the reaction graph is a $3$-cycle $y_1 \to y_2 \to y_3$, or even more generally, an $m$-cycle $y_1\to y_2\dots y_m\to y_1$ for distinct complexes $y_1,y_2\dots,y_m$, as we now show. The extra trick is to make use of the log-sum inequality~\cite[Theorem~2.7.1]{cover2012elements} which we now state for convenience.

\begin{theorem}[Log-sum inequality~\cite{cover2012elements}]
For non-negative numbers $a_1,a_2,\dots a_n$ and $b_1, b_2,\dots, b_n$, 
\[
	\sum_{i=1}^n a_i \log \frac{a_i}{b_i} \geq \left(\sum_{i=1}^n a_i\right) \log \frac{\sum_{i=1}^n a_i}{\sum_{i=1}^n b_i}
\]
with equality if and only if $\frac{a_i}{b_i} = \text{const}$.
\end{theorem}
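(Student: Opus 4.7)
The plan is to derive the log-sum inequality from Jensen's inequality applied to the strictly convex function $f(u) = u \log u$ on $(0, \infty)$. First I would verify convexity by noting $f''(u) = 1/u > 0$ on the positive reals. Boundary cases can be dispatched up front: terms with $a_i = 0$ contribute nothing on either side and can be discarded (using the convention $0 \log 0 = 0$), while any term with $b_i = 0$ but $a_i > 0$ makes the left-hand side $+\infty$, trivializing the inequality. Hence we may assume every $a_i, b_i > 0$.

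The key move is the choice of weights. Set $b := \sum_j b_j$ and $\lambda_i := b_i / b$, so $(\lambda_i)$ is a probability vector, and set $u_i := a_i / b_i$. Applying Jensen's inequality to $f$ at the points $u_i$ with weights $\lambda_i$ gives
$$\sum_i \lambda_i\, f(u_i) \;\geq\; f\Bigl(\sum_i \lambda_i u_i\Bigr).$$
The cancellation of $b_i$ in $\lambda_i u_i = a_i / b$ makes the left-hand side collapse to $(1/b)\sum_i a_i \log(a_i/b_i)$, and the right-hand side becomes $(a/b)\log(a/b)$, where $a := \sum_i a_i$. Multiplying through by $b$ recovers exactly the stated inequality.

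The equality case then comes for free from \emph{strict} convexity of $f$ on $(0,\infty)$: Jensen's inequality is sharp iff all points $u_i = a_i/b_i$ coincide, which is precisely the asserted condition. The main obstacle is not conceptual but notational—choosing the weights as $\lambda_i = b_i/b$ (rather than, say, $a_i/a$) is what makes $\lambda_i f(u_i)$ collapse to the correct summand, and one has to remember to rule out the degenerate coordinates before invoking Jensen.
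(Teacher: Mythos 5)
The paper offers no proof of this statement at all: it is imported as a black box from Cover and Thomas (Theorem~2.7.1 of \cite{cover2012elements}), so there is nothing internal to compare against. Your argument is correct and is in fact essentially the standard proof given in that reference: Jensen's inequality for the strictly convex $f(u)=u\log u$ with weights $\lambda_i=b_i/b$ and points $u_i=a_i/b_i$, with the telescoping $\lambda_i u_i=a_i/b$ doing the work, and strict convexity giving the equality case. One small imprecision worth fixing: a term with $a_i=0$ but $b_i>0$ does \emph{not} ``contribute nothing on either side'' --- it still adds $b_i$ to the denominator sum on the right. Discarding it is nevertheless harmless for the inequality, since shrinking $\sum_i b_i$ only increases the right-hand side (so the reduced inequality implies the original one); but for the equality characterization you must note that, when $\sum_i a_i>0$, this increase is strict, so equality in the original forces every discarded $b_i$ to vanish, which is exactly what restores the condition ``$a_i/b_i$ constant over all $i$'' rather than only over the surviving indices.
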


\begin{lemma}\label{lem:cyc}
Let $m\in\MZ_{\geq 2}$. Suppose the reaction graph is an $m$-cycle $y_1\to y_2\to\dots\to y_m\to y_{m+1}=y_1$ where $y_1,y_2\dots,y_m$ are distinct complexes. Then  for all $x\in\MR^S_{> 0}$, we have 
\[
\left\langle \nabla g(x),\sum_{j=1}^m k_{y_j\to y_{j+1}}(y_{j+1} - y_j) x^{y_j} \right\rangle \leq 0
\]
with equality iff $x$ is a point of complex balance.
\end{lemma}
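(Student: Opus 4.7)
The plan is to substitute $\nabla g(x) = \log(x/\alpha)$ into the inner product, convert the resulting sum into a form recognizable as the right-hand side of the log-sum inequality, and then read off both the inequality and its equality condition.

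First I would observe that on a simple $m$-cycle, Lemma~\ref{lem:zerocut} (applied to the cuts separating $\{y_1,\dots,y_j\}$ from its complement) forces the flow $f_\alpha$ to take the same value on every edge: there exists a constant $c>0$ with $k_{y_j\to y_{j+1}}\alpha^{y_j}=c$ for every $j$. Setting $u_j:=(x/\alpha)^{y_j}$, this gives $k_{y_j\to y_{j+1}}x^{y_j}=c\,u_j$. Using $\langle\log(x/\alpha),\,y_{j+1}-y_j\rangle=\log(u_{j+1}/u_j)$, the inner product in the lemma becomes
\[
c\sum_{j=1}^m u_j \log\frac{u_{j+1}}{u_j} \;=\; -c\sum_{j=1}^m u_j\log\frac{u_j}{u_{j+1}}.
\]

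Next I would apply the log-sum inequality to $a_j=u_j$, $b_j=u_{j+1}$. The crucial point, and the whole reason the cycle structure is being used, is that the index shift $j\mapsto j+1$ is a bijection on $\{1,\dots,m\}$, so $\sum_j b_j=\sum_j u_{j+1}=\sum_j u_j=\sum_j a_j$. Therefore the right-hand side of the log-sum inequality is $\bigl(\sum_j u_j\bigr)\log 1=0$, giving $\sum_j u_j\log(u_j/u_{j+1})\geq 0$ and hence the desired $\leq 0$ bound.

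For the equality clause, the log-sum inequality is tight iff $u_j/u_{j+1}$ is a constant $r$ independent of $j$; multiplying these ratios around the full cycle gives $r^m=1$, so $r=1$ and all $u_j$ are equal. This is exactly the statement that $c\,u_j=k_{y_j\to y_{j+1}}x^{y_j}$ is constant in $j$, i.e.\ inflow equals outflow at every vertex of the cycle when concentrations are at $x$, which is complex balance at $x$ for this reaction graph. I do not expect a serious obstacle: the only subtle step is noticing that the cyclic shift preserves the denominator sum, which is what makes the log-sum bound collapse to zero rather than to something unsigned.
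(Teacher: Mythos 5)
Your proposal is correct and follows essentially the same route as the paper: identify the constant flow $f_\alpha$ on the cycle, reduce the inner product to $c\sum_j u_j\log(u_{j+1}/u_j)$ with $u_j=(x/\alpha)^{y_j}$ (the paper's $M_j$), and apply the log-sum inequality, using that the cyclic shift preserves the sum so the bound collapses to zero. Your extra step closing the equality case ($r^m=1$ hence $r=1$) is a small detail the paper elides but is a welcome clarification.
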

\begin{proof}
Since $f_\alpha$ is a conservative flow, for $j=1$ to $m$, the flow $f_\alpha(y_j\to y_{j+1})$ is constant, independent of $j$. So we may write it as $f_\alpha$. We will also find it convenient to define $M_j(x) := (x/\alpha)^{y_j}$.
\begin{align*}
&\left\langle \nabla g(x),\sum_{j=1}^m k_{y_j\to y_{j+1}}(y_{j+1} - y_j) x^{y_j} \right\rangle\
\\&= \left\langle \log \frac{x}{\alpha}, \sum_{j=1}^m k_{y_j\to y_{j+1}}(y_{j+1} - y_j) x^{y_j} \right\rangle\
\\&= \sum_{j=1}^m \left\langle \log \frac{x}{\alpha}, (y_{j+1} - y_j) k_{y_j\to y_{j+1}}x^{y_j}\right\rangle\
\\&= \sum_{j=1}^m \left\langle \log \frac{x}{\alpha}, (y_{j+1} - y_j) f_\alpha(y_j\to y_{j+1}) \left(\frac{x}{\alpha}\right)^{y_j}\right\rangle\
\\&= f_\alpha \sum_{j=1}^m  \left(\frac{x}{\alpha}\right)^{y_j}\log \frac{(x/\alpha)^{y_{j+1}}}{(x/\alpha)^{y_j}}\
\\&= f_\alpha \sum_{j=1}^m  M_j(x)\log \frac{M_{j+1}(x)}{M_j(x)}\
\end{align*}
Let $M = M_1 + M_2 + \dots + M_m$. Since $f_\alpha$ and all the $M_j$'s are non-negative, by the log sum inequality~\cite[Theorem~2.7.1]{cover2012elements} we can rewrite this last term as: 
\[
f_\alpha M(x) \sum_{j=1}^m  \frac{M_j(x)}{M(x)}\log \frac{M_{j+1}(x)}{M_j(x)} \leq f_\alpha M(x) \log 1 = 0
\]
with equality  iff $M_j(x) = M_{j+1}(x)$ for all $j$ iff $k_{y_j\to y_{j+1}} x^{y_j} = k_{y_{j+1}\to y_{j+2}}x^{y_{j+1}}$ for all $j$ iff $f_x$ is conservative iff $x$ is a point of complex balance. 
\end{proof}

\begin{remark}
Lemma~\ref{lem:cyc} is Lemma~5A in~\cite{HornJackson}. Note that after getting the special form at the end, they have to appeal to a general inequality which they prove in the appendix. 
\end{remark}

\begin{remark}
In fact, the same proof shows that the  statement of Lemma~\ref{lem:cyc} is true even when $x\in\partial\MR^S_{\geq 0}$, provided the value of the logarithm at $0$ is interpreted appropriately according to continuity, and we allow points of complex balance on the boundary also.
\end{remark}

\section{Proof of Theorem~\ref{thm:main}}
In general, for the complex balance system $(S,\R,k)$, the reaction graph need not be a cycle. However, note that the expression $\langle\nabla g_\alpha(x), \sum_{y\to y'\in\R} (y'-y)f_x(y\to y')$ is linear over reactions, so it can be rewritten as 
\begin{equation}\label{eqn:dgdt}
\sum_{y\to y'\in\R}\langle\nabla g_\alpha(x), (y'-y)f_x(y\to y').
\end{equation}
In particular, we will rewrite this sum over directed simple cycles. If the reaction graph consists of cycles that don't share a directed edge --- for example $y_0 \to y_1 \to y_2\to y_1$ and $y_2\to y_1\to y_3\to y_2$ share vertices, but don't share an edge --- then Theorem~\ref{thm:main} follows by linearity, and by invoking Lemma~\ref{lem:cyc} on each cycle. The interesting case is when two cycles share an edge. In this case, we have to partition the flow on that edge across the two cycles, by partitioning the reaction rate across the two cycles. 

\begin{example}
Consider the reaction graph with the two $3$-cycles $C_1 = y_0\to y_1\to y_2\to y_0$ and $C_2 = y_1\to y_2\to y_3\to y_1$. Suppose that $\alpha$ is a point of complex balance for this reaction network. The edge $y_1\to y_2$ is shared between the two cycles. We want two numbers $k^{1}_{y_1\to y_2}$ and $k^{2}_{y_1\to y_2}$ adding up to $k_{y_1\to y_2}$, so that we can treat the sum in Expression~\ref{eqn:dgdt} separately over the two cycles. In other words, we want to define constant flows $f^1_\alpha$ on $C_1$ and $f^2_\alpha$ on $C_2$ such that on $y_1\to y_2$, we have $f_\alpha(y_1\to y_2) = f^1_\alpha + f^2_\alpha$. 

Since $f^1_\alpha$ is to be a constant flow, we define $k^1_{y_1\to y_2}$ from the equation 
\[  k^1_{y_1\to y_2} \alpha^{y_1} = k_{y_0\to y_1} \alpha^{y_0} = f^1_\alpha .\] 
Similarly we define $k^2_{y_1\to y_2}$ from the equation
\[
 k^2_{y_1 \to y_2} \alpha^{y_1} = f_\alpha(y_1\to y_2) - f^1_{\alpha} = f^2_{\alpha}
\]
where the last equality follows since the flow $f_\alpha$ is conservative. Note that $k^1_{y_1\to y_2} + k^2_{y_1\to y_2} = k_{y_1\to y_2}$.

Since we have fixed $k^1_{y_1\to y_2}$ and $k^2_{y_1\to y_2}$, we can define $f^1_x$ and $f^2_x$ for arbitrary $x$ as functions on the edges of $C_1$ and $C_2$ respectively. In particular, 
\[f^1_x(y_1\to y_2) = k^1_{y_1\to y_2} x^{y_1}.\]
We can now write Expression~\ref{eqn:dgdt} as
\[
\sum_{i=1}^2 \sum_{y\to y'\in C_i} \langle\nabla g_\alpha(x), (y'-y)f^i_x(y\to y')\rangle
\]
For each value of $i$, Lemma~\ref{lem:cyc} holds. Hence, we have shown for this example that Expression~\ref{eqn:dgdt} is non-positive, and is zero only if $x$ is a point of complex balance.
\end{example}

The key step in the above example is captured more generally in the following graph theory lemma.
\begin{lemma}\label{lem:graphtheo}
Fix a graph $G$ with a conservative positive flow $f$ on $G$. (This means that $f$ assigns to every directed edge of $G$ a positive real number such that at every node of $G$, the incoming flow equals the outgoing flow.) Then $G$ can be decomposed into simple, directed cycles $C_1,C_2,\dots, C_n$, with constant flows $f^i > 0$ on the $i$'th cycle so that for each edge $y\to y'$ in $G$:
\begin{equation}\label{eqn:flows}
	f(y\to y') = \sum_{i : y\to y' \in C_i} f^i.
\end{equation}
\end{lemma}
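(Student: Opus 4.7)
The plan is to prove this by induction on the number of edges of $G$, repeatedly peeling off one simple directed cycle at a time and subtracting a constant flow along it. The base case (no edges) is vacuous: take the empty collection of cycles.

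For the inductive step, first I would establish existence of at least one simple directed cycle in $G$. Because $f$ is positive on every edge and conservative, every vertex that has an incoming edge also has an outgoing edge (and vice versa). So starting from any edge $y \to y'$ and always following an outgoing edge, we generate an infinite directed walk; by the pigeonhole principle some vertex must repeat, and the portion of the walk between two consecutive occurrences of the first repeated vertex is a simple directed cycle $C_1$.

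Next, I would set $f^1 := \min_{y\to y' \in C_1} f(y\to y')$, which is strictly positive by assumption, and define a new edge-weighting
\[
f'(y\to y') := \begin{cases} f(y\to y') - f^1 & \text{if } y\to y' \in C_1, \\ f(y\to y') & \text{otherwise.} \end{cases}
\]
The key verification is that $f'$ is still a non-negative conservative flow: non-negativity follows because we subtracted the minimum edge-weight on $C_1$, and conservativity holds because at every vertex of $C_1$ we removed the same amount $f^1$ from one incoming and one outgoing edge (and vertices not in $C_1$ are untouched). Let $G'$ be the subgraph obtained from $G$ by deleting all edges on which $f'$ vanishes. Then $f'$ restricted to $G'$ is a positive conservative flow, and by construction at least one edge of $C_1$ (any edge achieving the minimum) has been deleted, so $G'$ has strictly fewer edges than $G$.

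By the inductive hypothesis, $G'$ with flow $f'$ admits a decomposition into simple directed cycles $C_2, C_3, \ldots, C_n$ with constant positive flows $f^2, \ldots, f^n$ satisfying the analogue of Equation~\ref{eqn:flows}. Prepending $(C_1, f^1)$ to this list yields the desired decomposition of $(G, f)$: for each edge $y\to y'$ of $G$ the relation $f(y\to y') = f'(y\to y') + f^1 \cdot \mathbf{1}[y\to y' \in C_1]$ combines with the inductive decomposition of $f'$ to give Equation~\ref{eqn:flows}. The main obstacle is really the verification of conservativity of $f'$, which is the one place the geometry of ``subtracting a constant flow along a cycle'' has to be used; every other step is bookkeeping.
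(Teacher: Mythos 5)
Your proposal is correct and follows essentially the same route as the paper: peel off a simple directed cycle, subtract the minimum edge flow $f^1$, check that the remainder is a non-negative conservative flow, delete the zero edges, and recurse (the paper phrases the recursion as infinite descent with an explicit invariant rather than induction on the edge count, which is only a cosmetic difference). Your explicit pigeonhole argument for the existence of a simple directed cycle is if anything slightly more self-contained than the paper's appeal to strong connectivity.
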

\begin{proof}
We will give an algorithm to find such cycles and flows. Consider the flow $f$. Pick an arbitrary directed simple cycle $C_1$ in $G$. Since $G$ is strongly connected, there always exists such a cycle. Assign to this cycle the minimum flow over all edges in the cycle, so $f^1 := \min_{y\to y' \in C_1} f(y\to y')$. Now define a new {\em remainder} flow 
\[
\hat{f}(y\to y') := 
\begin{cases}
f(y\to y') - f^1 \text{ if }y\to y'\in C_1\
\\f(y\to y')\text{ otherwise.}
\end{cases}
\] 
Drop all edges with $\hat{f}(y\to y') = 0$. At least one edge gets dropped, and we obtain a strict subgraph $\hat{G}$ of $G$. It is immediate that the flow $\hat{f}$ is a conservative positive flow on $\hat{G}$, because $\hat{f}$ differs from the conservative flow $f$ by the flow $f^1$ on a cycle, which makes a net contribution of $0$ at every vertex. Continuing in this manner by infinite descent, we will be left with the empty graph, where the lemma holds trivially. 

To see Equation~\ref{eqn:flows}, note that the following is an invariant of the algorithm: if $\hat{f}_k$ represents the remainder flow after $k$ rounds, then for each edge $y\to y'$ in $G$,
\[
f(y\to y') = \hat{f}_k(y\to y') + \sum_{i\leq k : y\to y' \in C_i} f^i.
\]
where $i$ is summed over all the previous rounds. Equation~\ref{eqn:flows} follows since the algorithm terminates with remainder flow zero.
\end{proof}

\begin{remark}
Lemma~\ref{lem:graphtheo} should be compared with Section~6 and, in particular, Lemma~6D in \cite{HornJackson}.
\end{remark}

We are now ready to prove the general case. 
\begin{proof}[Proof of Theorem~\ref{thm:main}]
Fix $t\geq 0$. Let us write $x$ for $x(t)$ and $g$ for $g_\alpha$. Then $\frac{dg_\alpha(x(t))}{dt}$ is given by Expression~\ref{eqn:dgdt}. We can rewrite $f_x(y\to y') = f_\alpha(y\to y')(x/\alpha)^y$. We will prove the inequality for every strongly connected component. By linearity, without loss of generality we may assume that the reaction graph is strongly connected. 

Since $f_\alpha$ is a conservative flow, by Lemma~\ref{lem:graphtheo}, the reaction graph can be decomposed into simple directed cycles $C_1,C_2,\dots,C_n$ and constant flows $f_\alpha^1,f_\alpha^2,\dots,f_\alpha^n$ on them, satisfying Equation~\ref{eqn:flows}. So we may write
\begin{align*}
\frac{dg_\alpha(x(t))}{dt} &=  \sum_{y\to y'\in \R}\left\langle\nabla g_\alpha(x), (y'-y)f_\alpha(y\to y')(x/\alpha)^y\right\rangle\
\\&= \sum_{i=1}^n f_\alpha^i \sum_{y\to y'\in C_i}\left\langle\log(x/\alpha), (y'-y)(x/\alpha)^y\right\rangle\
\end{align*}
By Lemma~\ref{lem:cyc}, each of the inner sums is $\leq 0$, and is zero iff the corresponding flow $f_x^i$ is conservative. The entire sum is $0$ iff each such flow $f_x^i$ is conservative in which case $f_x$ is conservative and $x$ is a point of complex balance. Conversely, if $x$ is a point of complex balance then $f_x$ is conservative and $dx/dt = 0$ forcing $\frac{dg_\alpha(x(t))}{dt}=0$.
\end{proof}\
\\
\\{\em Acknowledgements:} I thank Pranab Sen for pointing out the application of the log-sum inequality in Lemma~\ref{lem:cyc}.
\bibliographystyle{amsplain}
\bibliography{../eventsystems}
\end{document}